\newcommand\blfootnote[1]{%
  \begingroup
  \renewcommand\thefootnote{}\footnote{#1}%
  \addtocounter{footnote}{-1}%
  \endgroup
}
\newcommand{\proj}{{\text{proj}}}
\newtheorem{theorem}{Theorem}[section]
\newtheorem{lemma}[theorem]{Lemma}
\newtheorem{corollary}[theorem]{Corollary}
\newtheorem{proposition}[theorem]{Proposition}
\theoremstyle{definition}
\newtheorem{definition}[theorem]{Definition}
\newtheorem{example}[theorem]{Example}
\theoremstyle{remark}
\newtheorem{remark}[theorem]{Remark}
\theoremstyle{remark}
{

}
\numberwithin{equation}{section}
\newcommand{\CC}{{\mathbb C}}
\newcommand\reallywidetilde[1]{\ThisStyle{%
  \setbox0=\hbox{$\SavedStyle#1$}%
  \stackengine{-.1\LMpt}{$\SavedStyle#1$}{%
    \stretchto{\scaleto{\SavedStyle\mkern.2mu\AC}{.5150\wd0}}{.6\ht0}%
  }{O}{c}{F}{T}{S}%
}}
\def\test#1{$%
  \reallywidetilde{#1}\,
$\par}
\newcommand{\bigslant}[2]{{\raisebox{.2em}{$#1$}\left/\raisebox{-.2em}{$#2$}\right.}}
\begin{document}

\title{GLS homogenization tilde map}

\author{Fayadh Kadhem}
\address{College of Arts and Sciences\\
American University of Bahrain\\
Riffa, Bahrain}
\email{fayadh.kadhem@aubh.edu.bh}

\subjclass{Primary 13F60; Secondary 14M15, 13N15.}
\date{\today}
\maketitle

\begin{abstract}
In the construction of a cluster algebra on the homogeneous coordinate ring of a partial flag variety by Gei{\ss}, Leclerc and Schr{\"{o}}er, they defined a special map denoted by ``tilde". This map lifts each element $f$ of the coordinate ring of a Schubert cell uniquely to an element $\widetilde{f}$ of the (multi-homogeneous) coordinate ring of the corresponding partial flag variety. The significance of this map appears from its essential role; it lifts the cluster algebra of the coordinate ring of a cell to a cluster algebra living in the coordinate ring of the corresponding partial flag variety. This paper takes a closer look at this map and gives an explicit algorithm to calculate it for the \textit{generalized minors}.
\end{abstract}

\maketitle
\blfootnote{The research of the author has been supported by NSF grant DMS-2131243.}
\section{Introduction}
Cluster algebras were constructed in 2002 by Fomin and Zelevinsky \cite{FZ} and they quickly formed a very active area of mathematics. This is due to its connections and relations with many other areas in mathematics like algebraic geometry, Poisson geometry, knot theory, mathematical physics and integrable systems. In Fomin and Zelevinsky's work, it has been shown that the notion of cluster algebras is strongly related to the notion of total positivity. Consequently, the first explicit relation between cluster algebras and partial flag varieties appeared in the work of Scott \cite{S} in 2006. Two years after, Gei{\ss}, Leclerc and Schr{\"{o}}er ~\cite{GLS} showed that for type $A$ and $D_4$ the coordinate ring of a partial flag variety contains a cluster algebra induced by a cluster algebra structure of the coordinate ring of the Schubert cell. In fact, they showed the equality for type $A$ and the containment for type $D_4$. However, they showed the equality for type $D_4$ after localizing certain elements. Moreover, they remarked that similar work can be used to generalize their results to the other types of $G$. Indeed, they conjectured that the coordinate ring of the Schubert cell is a cluster algebra and this structure can be lifted using a special map, called the ``tilde" map, to a cluster algebra living in the coordinate ring of the corresponding partial flag variety. In 2011, they proved the conjecture that the coordinate ring of the cell is a cluster algebra for all the simply-laced cases in \cite{GLS3}. Later on, in \cite{GY}, Goodearl and Yakimov constructed a large class of cluster algebras coming from Poisson algebras. As a consequence of this work, the same conjecture was proved for any semisimple complex algebraic group $G$. The same tilde map of \cite{GLS}, was used in \cite{Kadhem}, to show that the cluster algebra of the coordinate ring of the cell can be lifted to a cluster structure living in the coordinate ring of the corresponding partial flag variety for any semisimple complex algebraic group $G$. Moreover, the same paper proved the equality after localizing some certain elements for any type of $G$.\\

In this paper, we take a closer look at the tilde map and give an explicit algorithm to calculate it for the \textit{generalized minors}. This algorithm gives a full description of the cluster algebra of the partial flag variety since the initial seed of the cluster algebra of Goodearl and Yakimov consisted of generalized minors only. The paper is organized as follows: In section 2, we take an overview of cluster algebras by giving the main definitions and notions related to them. After that, we capture the needed notions and results from the Schubert cells and partial flag varieties. In section 4, we describe the tilde map on the generalized minors and then we show how this can be applied to the cluster algebra of \cite{Kadhem} in section 5. Finally, we give explicit type $A$ and type $B$ examples in section 6 to show how the results of section 5 work in both of them.

\section{Cluster algebras}
We start this section by introducing the notion of a cluster algebra. This is given by the following sequence of definitions.
\begin{definition}
A \textit{seed} is a pair $(\textbf{x},B)$ such that $\textbf{x}=(x_1,...,x_n,x_{n+1},...,x_m)$ is a tuple of algebraically independent variables generating a field isomorphic to the field $\mathbb{C}(x_1,...,x_n,x_{n+1},...,x_m)$. Also, $B$ is an $m \times n$ extended skew-symmetrizable matrix, that is, a matrix whose northwestern $n \times n$ submatrix can be transformed to a skew-symmetric matrix by multiplying each row $r_i$ by a nonzero integer $d_i$. The matrix $B$ is called the \textit{exchange matrix} and the tuple $\textbf{x}$ is called the \textit{extended cluster}. The variables $x_1,...,x_n$ are called \textit{mutable}, while the variables $x_{n+1},...,x_m$ are called \textit{frozen}.
\end{definition}

\begin{definition}\label{mutation}
Let $k$ be an index of a mutable variable of a seed $(\textbf{x},B)$. A \textit{mutation} at $k$ is a transformation to a new seed $(\textbf{x}',B')$ in which $B'$ is an $m \times n$ matrix whose entries are
\begin{equation}
    \label{eq1}
b'_{ij}=\begin {cases}
-b_{ij}, & \text{if}\ i=k \text{ or } j=k,\\ 
b_{ij}+\dfrac{|b_{ik}|b_{kj} + b_{ik}|b_{kj}|}{2}, & \text{otherwise};\\
\end{cases}
\end{equation}
and
$\textbf{x}'=(x_1',...,x_n',x_{n+1}',...,x_m')$ is a tuple such that $x_i'=x_i$ for $i \neq k$ and
$$x_k x_k' = \prod_{b_{ik}>0}x_i^{b_{ik}} + \prod_{b_{ik}<0}x_i^{-b_{ik}}.$$
The seed $(\textbf{x}',B')$ obtained by a mutation at $k$ is denoted sometimes by $\mu_k (\textbf{x},B)$. 
\end{definition}

\begin{remark}
It is not hard to see that the mutation of a seed provides a new seed. Moreover, mutating twice at the same index brings the original seed back. In symbols,
$$\mu_k (\mu_k (\textbf{x},B))=(\textbf{x},B).$$
\end{remark}

\begin{definition}
Let $(\textbf{x},B)$ be a seed. A \textit{cluster algebra (of geometric type)} attached to $(\textbf{x},B)$ is the polynomial algebra $\mathcal{A}=\mathbb{C}[x_{n+1},...,x_{m}][\chi]$, where $\chi$ is the set of all possible mutable variables, that is, the mutable variables of the original seed or a seed obtained by a mutation or a sequence of mutations. The seed $(\textbf{x},B)$ is called the \textit{initial seed}.
\end{definition}

\begin{remark}
By the properties of mutation and algebraically independent sets, it is not hard to see that the cluster algebra attached to some seed is the same cluster algebra attached to any mutation of it.
\end{remark}

\begin{definition}
The \textit{rank} of a seed or a cluster algebra attached to it is the number of mutable variables of its initial seed. A cluster algebra is \textit{of finite type} if it has finitely many seeds. Otherwise, it is \textit{of infinite type}.
\end{definition}
\section{Schubert cells and partial flag varieties}
Throughout this paper, let $G$ be a simply-connected semisimple algebraic complex group and let $I$ denote the Dynkin diagram vertex set of $G$. Also, let $K$ denote a subgroup of the set $I$ whose complement is $J=I \setminus K$. For every $G$ denote by $B$ and $B^-$ a pair of Borel and opposite Borel subgroups in which their unipotent radicals are $N$ and $N^-$ respectively. This first section follows \cite{GLS,GLS2}.

\begin{remark}
For every unipotent subgroup there are one-parameter root subgroups indexed by the Dynkin diagram set that form a set of distinguished generators of it. Let $x_i(t)$ $(i \in I,\text{ } t\in \mathbb C)$ denote the ones of $N$ and $y_i(t)$ denote those of $N^-$.
\end{remark}

\begin{example}
If $G=SL_{n+1}$ then $x_i(t)= I + tE_{i,i+1}$, where $I$ is the identity matrix and $E_{i,j}$ is the matrix having 1 in the $i\times j$ entry and 0 elsewhere.
\end{example}

\begin{definition}
A \textit{parabolic subgroup} $P$ of $G$ is a closed subgroup that contains a Borel subgroup.
\end{definition}

\begin{example}
\begin{enumerate}
    \item Any Borel subgroup of $G$ is parabolic.
    \item For the pair $B$ and $B^-$ of opposite Borel subgroups, we define $P_K$ to be the subgroup generated by $B$ and $y_k(t)$ $(k\in K)$. The subgroup $P_K$ is parabolic and called the \textit{standard parabolic subgroup} associated to $B$.
    Similarly, $P_K^-$ is defined to be the subgroup generated by $B^-$ and $x_k(t)$ $(k\in K)$ and is parabolic as well.
\end{enumerate}
\end{example}

\begin{remark}
Every parabolic subgroup is conjugate to a unique standard Borel subgroup. This, in many situations, reduces the study of parabolic subgroups to the study of standard parabolic case.
\end{remark}

\begin{definition}
A \textit{(partial) flag variety} is a quotient of the form $G/P$, where $P$ is a parabolic subgroup of $G$.
\end{definition}

\begin{remark}[Generalized minors]
If $G$ is of type $A$, then a \textit{(flag) minor} is a regular irreducible function of $\mathbb C [G]$ defined as follows: For any $I \subset [1,n]:= \{1,...,n\}$ and any matrix $x \in G$, the minor
$\Delta_I(x)$ is defined to be the determinant of the submatrix of $x$ whose rows are indexed by $I$ and columns are indexed by $1,...,|I|$. This notion was generalized by Fomin and Zelevinsky in \cite{FZ1} to the notion of \textit{(generalized) minor} $\Delta_{u{\varpi_j, w(\varpi_j)}}$, where $u,w$ belong to the Weyl group $W$. The notions of flag minors and generalized minors coincide in type $A$. However, the generalized minor notion makes sense in any type.
\end{remark}
\vspace{10pt}

For each simple reflection $s_i \in W$, let
$$\overline{s_i}:=\exp(f_i) \exp(-e_i) \exp (f_i) \quad \textnormal{ and } \quad \overline{\overline{s_i}}:=\exp(-f_i) \exp(e_i) \exp (-f_i).$$
If $w=s_{i_1}...s_{i_r}$ with $r$ being the length of $w$, then define
$$\overline{w}=\overline{s_{i_1}}...\overline{s_{i_r}} \quad \textnormal{ and } \quad \overline{\overline{w}}=\overline{\overline{s_{i_1}}}...\overline{\overline{s_{i_r}}}.$$
Let $G_0=N^-HN$ be the open set of $G$ consisting of elements having Gaussian decomposition. Indeed, each $x\in G_0$ can be uniquely represented as
$$x=[x]_{-}[x]_0[x]_+,$$
where $[x]_{-}\in N^{-},$ $[x]_0\in H,$ $[x]_+ \in N$.
Fix a choice $\{\varpi_i \mid i \in I \} \subset \mathfrak{h}^*$ of fundamental weights, that is,
$$\varpi_i(h_j)=\delta_{ij}, \quad (i,j \in I).$$
Let $x \mapsto x^{\varpi_i}$ be the corresponding character of $H$. There is a unique regular function $\Delta_{\varpi_i,\varpi_i}$ on $G$ such that
$$\Delta_{\varpi_i,\varpi_i}(x)=[x]_0^{\varpi_i}.$$
Moreover, $G_0=\{x \in G \mid \Delta_{\varpi_i,\varpi_i}(x) \neq 0\ , i \in I\}$. This gives the following definition introduced by Fomin and Zelevinsky in \cite{FZ1}.\\

\begin{definition}
For $u,v \in W$ and $i \in I$, define the \textit{generalized minor} to be the regular function on $G$ given by
$$\Delta_{u\varpi_i, v\varpi_i}(x)=\Delta_{\varpi_i,\varpi_i}(\overline{\overline{u^{-1}}}x \overline{v}).$$
\end{definition}

\begin{remark}
Let $\Pi_J \cong \mathbb N^J$ be the set of elements of the form $\lambda=\sum_{j \in J} a_j \varpi_j$, where $a_j \in \mathbb N$. The homogeneous coordinate ring $\mathbb C [G/P_K^-]$ is a $\Pi_J$ graded module defined as
$$\mathbb C [G/P_K^-]= \bigoplus_{\lambda \in \Pi_J} L(\lambda),$$
where $L(\lambda)$ is the irreducible $G$-module whose highest weight is $\lambda$ (see 2.3 in \cite{GLS}). Moreover, $\mathbb C [G/P_K^-]$ is generated by the subspaces $\bigoplus_{j \in J} L(\varpi_j)$.
\end{remark}

\begin{remark}
  The muti-degree of a generalized minor $\Delta_{\varpi_j, w(\varpi_j)}$ in $\mathbb C [G/P_K^-]$ is $\varpi_j$. It is known that the affine coordinate ring of the unipotent radical cell $N_K$ of $P_K$ is the homogeneous elements of degree 0 of the localization of the homogeneous coordinate ring of the corresponding flag variety by the elements $\Delta_{\varpi_j,\varpi_j}$ where $j \in J$. In symbols,
  \begin{equation}\label{localization}
  \CC[N_K]=\left\{\dfrac{f}{\prod_{j \in J} \Delta_{\varpi_j,\varpi_j}^{a_j}} \mid f\in L \bigg(\sum_{j\in J} a_j \varpi_j \bigg) \right\}.
  \end{equation}
  Equivalently, the generalized minors relate the affine coordinate ring of the cell $N_K$ to the homogeneous coordinate ring of the corresponding flag variety by
  \begin{equation}\label{quotient}
  \mathbb C [N_{K}]=\bigslant{{ \mathbb C [G/P_{K}^{-}]}}{(\Delta_{\varpi_j,\varpi_j}-1)}_{j \in J}.
  \end{equation}
 \end{remark}
 
\begin{remark}
Naturally, there is a canonical projection map
$$\text{proj}_J: \mathbb C [G/P_K^-] \to \mathbb C [N_K]$$
given by the quotient by the ideal generated by $\Delta_{\varpi_j, w(\varpi_j)} -1,$ $(j \in J)$. Remarkably, the restriction of the projection map to each homogeneous $L(\lambda),$ $(\lambda \in \Pi_J)$ gives an injection $L(\lambda) \xhookrightarrow{} \mathbb C[N_K]$.
\end{remark}

\begin{remark}
There is a standard partial ordering $\preceq$ on $\Pi_J$ given by:
$$\lambda \preceq \mu \iff \mu - \lambda \in \mathbb N \{\varpi_j \mid j \in J \},$$
that is, $\mu - \lambda$ is an $\mathbb N$-linear combination of the fundamental weights $\varpi_j$, where $j \in J$.
\end{remark}

\begin{remark}
The Chevalley generators of the Lie algebra $\mathfrak{g}$ of $G$ are denoted, as usual, by $e_i,f_i,h_i$, where $i$ runs over $I$. The $e_i$'s generate $\textnormal{Lie}(N)= \mathfrak{n}.$ Naturally, $N$ acts from the left and right on $\CC[N]$ by these left and right actions:
$$(x \cdot f)(n) = f(nx), \quad  (f \in \CC[N] \textnormal{ and } x,n \in N),$$
$$(f \cdot x)(n) = f(xn), \quad  (f \in \CC[N] \textnormal{ and } x,n \in N).$$
Now, if we differentiate these two actions, we get left and right actions of $\mathfrak{n}$ on the coordinate ring $\CC[N]$. Throughout this paper, the right action of $e_i$ on $f \in \CC[N]$ will be denoted by $e ^ \dagger _i (f) := f \cdot e_i.$
\end{remark}

\begin{lemma}[GLS tilde map] \label{GLS tilde map}
For any element $f$ of the coordinate ring $\mathbb C[N_K]$ there is a unique homogeneous element $\widetilde{f}$ in $\mathbb C [G/P_{K}^{-}]$ whose projection to $\mathbb C[N_K]$ is $f$ and whose multi-degree is minimal with respect to the partial ordering $\preceq$ of weights. 
\end{lemma}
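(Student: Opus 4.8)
The plan is to phrase everything in terms of the monoid of admissible degrees and reduce the statement to a lattice-theoretic fact. Write $R:=\CC[G/P_K^-]=\bigoplus_{\lambda\in\Pi_J}L(\lambda)$, abbreviate $\Delta_j:=\Delta_{\varpi_j,\varpi_j}$, and for $\lambda=\sum_{j\in J}a_j\varpi_j\in\Pi_J$ set $\Delta^\lambda:=\prod_{j\in J}\Delta_j^{a_j}$, a homogeneous element of degree $\lambda$. Since $\CC[N_K]$ is the degree-$0$ part of the localization of $R$ at the $\Delta_j$ (equation \eqref{localization}), a homogeneous $g\in L(\lambda)$ projects to $g/\Delta^\lambda$. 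Hence, for a fixed degree $\lambda$, an element of $L(\lambda)$ lifts $f$ if and only if it equals $f\,\Delta^\lambda$ computed in the localization; by the injectivity of $\proj_J$ on each $L(\lambda)$ recorded above there is at most one such lift, so uniqueness at a fixed degree is automatic. It therefore suffices to study the set of admissible degrees $S_f:=\{\lambda\in\Pi_J\mid f\,\Delta^\lambda\in R\}$ and to show it has a least element for $\preceq$.

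First I would record three properties of $S_f$. It is nonempty: by \eqref{localization} we may write $f=h/\Delta^{\lambda_0}$ with $h\in L(\lambda_0)$, whence $\lambda_0\in S_f$. It is upward closed: if $\lambda\in S_f$ and $\mu\succeq\lambda$, then $f\,\Delta^\mu=(f\,\Delta^\lambda)\,\Delta^{\mu-\lambda}\in R$. The crucial property is closure under the meet $\wedge$ (componentwise minimum) of $\Pi_J\cong\NN^J$. Given $\lambda,\lambda'\in S_f$, put $g:=f\,\Delta^\lambda$ and $g':=f\,\Delta^{\lambda'}$ in $R$; since both project to $f$, in the domain $R$ one has $g\,\Delta^{\lambda'}=g'\,\Delta^{\lambda}$, and cancelling the common factor $\Delta^{\lambda\wedge\lambda'}$ yields
\[
g\,\Delta^{\lambda'-(\lambda\wedge\lambda')}=g'\,\Delta^{\lambda-(\lambda\wedge\lambda')},
\]
where the two displayed $\Delta$-monomials are supported on disjoint subsets of $J$.

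To finish the meet-closure I would invoke factoriality. Each $\Delta_j$ has the minimal nonzero degree $\varpi_j$ in the $\Pi_J$-grading, so it is irreducible (the only degree-$0$ elements are the nonzero constants); and $R$ is a UFD because the class group of $G/P_K^-$ is the free group $\ZZ^J$, so the total coordinate ring is factorial. Thus the $\Delta_j$ are pairwise non-associate primes, $\Delta^{\lambda-(\lambda\wedge\lambda')}$ is coprime to $\Delta^{\lambda'-(\lambda\wedge\lambda')}$, and the displayed equality forces $\Delta^{\lambda-(\lambda\wedge\lambda')}\mid g$. Consequently $f\,\Delta^{\lambda\wedge\lambda'}=g/\Delta^{\lambda-(\lambda\wedge\lambda')}\in R$, i.e. $\lambda\wedge\lambda'\in S_f$. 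I expect this divisibility step to be the main obstacle, since it is where the geometry of $G/P_K^-$ (factoriality of the total coordinate ring together with irreducibility of the fundamental minors $\Delta_j$) actually enters; the remaining steps are formal.

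With $S_f$ nonempty, upward closed and meet-closed inside the well-founded poset $(\Pi_J,\preceq)$, the conclusion is immediate: any two $\preceq$-minimal elements $\lambda,\lambda'$ satisfy $\lambda\wedge\lambda'\in S_f$ with $\lambda\wedge\lambda'\preceq\lambda$ and $\lambda\wedge\lambda'\preceq\lambda'$, so minimality gives $\lambda=\lambda\wedge\lambda'=\lambda'$, while well-foundedness of $\NN^J$ guarantees that a minimal element exists. Hence $S_f$ has a unique least element $\lambda_{\min}$, and $\widetilde{f}:=f\,\Delta^{\lambda_{\min}}\in L(\lambda_{\min})$ is the required homogeneous lift of minimal degree, its uniqueness at that degree having been noted at the outset.
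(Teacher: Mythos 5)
Your proof is correct, but it takes a genuinely different route from the paper's. The paper's argument is representation-theoretic: it quotes the description
$$\proj_J(L(\lambda))=\{\,h\in\CC[N_K]\mid (e_j^\dagger)^{a_j+1}h=0,\ j\in J\,\},$$
and from it reads off an \emph{explicit} least admissible degree $\lambda(f)=\sum_{j\in J}a_j(f)\varpi_j$ with $a_j(f)=\max\{s\mid (e_j^\dagger)^s f\neq 0\}$; minimality is then immediate (one has $\lambda(f)\preceq\lambda$ for every admissible $\lambda$), and uniqueness follows from the injectivity of $\proj_J$ on the piece $L(\lambda(f))$. You never exhibit the minimal degree explicitly: instead you show the set $S_f$ of admissible degrees is nonempty, upward closed, and closed under componentwise minimum, the last point via factoriality of $\CC[G/P_K^-]$ (it is the Cox ring of $G/P_K^-$, a UFD because the divisor class group is free --- the Elizondo--Kurano--Watanabe/Berchtold--Hausen theorem) together with primality of the $\Delta_{\varpi_j,\varpi_j}$, and you then conclude by well-foundedness of $\NN^J$. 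This is sound, though two steps you gloss over deserve a line each: factors of a homogeneous element in an $\NN^J$-graded domain are themselves homogeneous (needed for the irreducibility of $\Delta_{\varpi_j,\varpi_j}$; argue with a lexicographic order on degrees), and the passage from ``the minimal element of $S_f$ is unique'' to ``it is a least element'' uses meet-closure once more, since $\lambda\wedge\lambda_{\min}\in S_f$ forces $\lambda_{\min}\preceq\lambda$ for every $\lambda\in S_f$. As for what each approach buys: yours avoids the Lie-theoretic input entirely and works verbatim in any $\NN^J$-graded UFD whose degree-$\varpi_j$ generators are irreducible, but it imports a nontrivial geometric theorem (factoriality of the multi-homogeneous coordinate ring) and gives no handle on \emph{what} the minimal degree is; the paper's derivation-based formula for $\lambda(f)$ is not incidental --- it is precisely what is computed later (the lift degree of Definition \ref{lift deg} and Corollary \ref{minor 2}), so the paper's proof doubles as the algorithm the rest of the paper is about.
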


\begin{proof}[Proof (GLS)]
For any $\lambda = \sum_{i \in I} a_i \varpi_i$, it is remarked in 2.5 in \cite{GLS} that the subspace $\text{proj}_I (L(\lambda))$ of $\mathbb C[N]$ can be described as
$$\text{proj}_I (L(\lambda)) = \{ f\in \mathbb C [N] \mid (e_i^{\dagger})^{a_{i}+1}f=0, \text{ } i \in I \}.$$
This implies that $\mathbb C[N_K]$ can be identified with
$$\{ f\in \mathbb C [N] \mid e_k^{\dagger}f=0, \text{ } k \in K \} \subset \mathbb C[N].$$
Thus, for any $\lambda = \sum_{j \in J} a_j \varpi_j \in \Pi_J$, it follows that
$$\text{proj}_J (L(\lambda)) = \{ f\in \mathbb C [N_K] \mid (e_j^{\dagger})^{a_{j}+1}f=0, \text{ } j \in J \}.$$
Now, for $f \in \mathbb C[N_K]$, define
\begin{equation}\label{eq1}
a_j(f):= \max \left\{ s \mid (e_j^{\dagger})^sf \neq 0 \right\},
\end{equation}
and
\begin{equation}\label{eq2}
\lambda(f) := \sum_{j \in J} a_j(f) \varpi_j.
\end{equation}
Obviously, $f$ is an element of $\text{proj}_J \Big(L \big(\lambda(f) \big) \Big)$ where $\lambda(f)$ is minimal with respect to $\preceq$, that is, if $\lambda \in \Pi_J$ such that $f \in \text{proj}_J (L(\lambda))$, then $\lambda(f) \preceq \lambda$. On the other hand, as the restriction of $\text{proj}_J$ to the piece $L \big(\lambda(f) \big)$ is injective, this gives the desired uniqueness. That is, the element $\widetilde{f} \in L \big(\lambda(f) \big)$, whose projection $\proj_J (\widetilde{f})=f$.
\end{proof}

\begin{definition}
For any $f\in \CC[N_K]$, the element $\widetilde{f} \in \CC[G/P_K^-]$ will be called the \textit{lift (or the lifting) of $f$} to $\CC[G/P_K^-].$
\end{definition}

\begin{lemma}\label{multip.}
For any $f,g \in \CC[N_K]$, the lifting commutes with usual multiplication, that is, $\widetilde{f \cdot g}=\widetilde{f} \cdot \widetilde{g}$. In other words, the following diagram
\[ \begin{tikzcd}
\CC[N_K]\times \CC[N_K] \arrow{r}{m_{\CC[N_K]}} \arrow[swap]{d}{\widetilde{\cdot}\times \widetilde{\cdot}} & \CC[N_K] \arrow{d}{\widetilde{\cdot}} \\%
\CC[G/P_K^-]\times \CC[G/P_K^-] \arrow{r}{m_{\CC[G/P_K^-]}}& \CC[G/P_K^-]
\end{tikzcd}
\]
given by
\[
\begin{tikzcd}
f \times g \arrow[mapsto]{r} \arrow[mapsto]{d}
& f \cdot g  \arrow[mapsto]{d}\\
\widetilde{f} \times \widetilde{g} \arrow[mapsto]{r}
& \widetilde{f \cdot g}=\widetilde{f} \cdot \widetilde{g}
\end{tikzcd}
\]
commutes. Moreover, if $a_j(f+g)= \max \{a_j(f), a_j(g) \}$ for all $j \in J$, then
$$\widetilde{f+g}=\mu \widetilde{f} + \nu \widetilde{g},$$
where $\mu$ and $\nu$ are relatively prime monomials in the variables $\Delta_{\varpi_j, \varpi_j}$, $(j \in J)$.
\end{lemma}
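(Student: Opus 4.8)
The plan is to prove both statements through the uniqueness characterization supplied by Lemma~\ref{GLS tilde map}: to identify the lift $\widetilde{h}$ of an element $h \in \CC[N_K]$ it suffices to produce a \emph{homogeneous} element of $\CC[G/P_K^-]$ that (i) projects to $h$ under $\proj_J$ and (ii) has multi-degree equal to the minimal degree $\lambda(h) = \sum_{j\in J} a_j(h)\varpi_j$; the element lying in $L(\lambda(h))$ with these two properties is then forced to equal $\widetilde h$, because $\proj_J$ restricts to an injection on each graded piece. Since $\proj_J$ is the quotient map by the ideal $(\Delta_{\varpi_j,\varpi_j}-1)_{j\in J}$, it is an algebra homomorphism with $\proj_J(\Delta_{\varpi_j,\varpi_j})=1$; this makes the verification of (i) immediate in both cases, so the real content lies in the degree bookkeeping for (ii).

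For multiplicativity, the key is the identity $a_j(fg)=a_j(f)+a_j(g)$ for every $j\in J$. First I would record that $e_j^\dagger$ is a derivation of $\CC[N]$, being the infinitesimal generator of a one-parameter group action (equivalently, $e_j$ is primitive in $U(\mathfrak g)$), so the Leibniz rule iterates to
$$(e_j^\dagger)^s(fg)=\sum_{k=0}^{s}\binom{s}{k}\,(e_j^\dagger)^k(f)\cdot(e_j^\dagger)^{s-k}(g).$$
Writing $p=a_j(f)$ and $q=a_j(g)$, at $s=p+q$ every summand with $k>p$ or $k<p$ dies (because then $k>p$, or else $p+q-k>q$), leaving the single term $\binom{p+q}{p}(e_j^\dagger)^p(f)\,(e_j^\dagger)^q(g)$, which is nonzero since $\CC[N]$ is an integral domain and both factors are nonzero; at $s=p+q+1$ a parallel index count kills every summand. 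Hence $a_j(fg)=p+q$, so $\lambda(fg)=\lambda(f)+\lambda(g)$. As the grading on $\CC[G/P_K^-]=\bigoplus_\lambda L(\lambda)$ is multiplicative, $\widetilde f\,\widetilde g\in L(\lambda(f)+\lambda(g))=L(\lambda(fg))$; it projects to $fg$ and sits in the minimal-degree piece, so uniqueness gives $\widetilde{fg}=\widetilde f\,\widetilde g$.

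For the additive statement, set $m_j=\max\{a_j(f),a_j(g)\}=a_j(f+g)$ and define the monomials $\mu=\prod_{j\in J}\Delta_{\varpi_j,\varpi_j}^{\,m_j-a_j(f)}$ and $\nu=\prod_{j\in J}\Delta_{\varpi_j,\varpi_j}^{\,m_j-a_j(g)}$. For each $j$ one of the two exponents vanishes, so $\mu$ and $\nu$ share no common factor, i.e.\ they are relatively prime. Because $\deg\Delta_{\varpi_j,\varpi_j}=\varpi_j$, both $\mu\widetilde f$ and $\nu\widetilde g$ are homogeneous of multi-degree $\sum_{j\in J} m_j\varpi_j=\lambda(f+g)$, so their sum lies in $L(\lambda(f+g))$; and $\proj_J(\mu)=\proj_J(\nu)=1$ gives $\proj_J(\mu\widetilde f+\nu\widetilde g)=f+g$. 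Once more the element lies in the minimal-degree piece and projects correctly, so $\widetilde{f+g}=\mu\widetilde f+\nu\widetilde g$.

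I expect the main obstacle to be the degree computation $a_j(fg)=a_j(f)+a_j(g)$, and within it the nonvanishing of the leading cross term $(e_j^\dagger)^p(f)\,(e_j^\dagger)^q(g)$; this is exactly where the derivation property of $e_j^\dagger$ and the integral-domain property of $\CC[N]$ are indispensable. The remaining points I would take care to justify are that the $\Pi_J$-grading is genuinely an algebra grading (so products land precisely in the sum of degrees), and that the minimality-plus-injectivity argument of Lemma~\ref{GLS tilde map} applies verbatim to the explicitly constructed homogeneous representatives.
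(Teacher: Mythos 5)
Your proof is correct and takes essentially the same route as the paper's: multiplicativity via the Leibniz rule for the derivation $e_j^\dagger$ (yielding $a_j(fg)=a_j(f)+a_j(g)$ and hence additivity of the minimal degrees $\lambda$), and the additive statement via relatively prime monomials in the $\Delta_{\varpi_j,\varpi_j}$ chosen to equalize the multi-degrees of $\mu\widetilde{f}$ and $\nu\widetilde{g}$ with that of $\widetilde{f+g}$, concluding both times by the uniqueness/injectivity in Lemma~\ref{GLS tilde map}. The only difference is that you make explicit the details the paper leaves implicit (the binomial expansion, the integral-domain argument for the surviving cross term, and the exponents $m_j-a_j(f)$ and $m_j-a_j(g)$).
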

\begin{proof}[Proof (GLS)]
The first property follows easily by Leibniz formula and the fact that the endomorphism $e_j^\dagger$ is a derivation of $\CC[N_K]$, for all $j\in J.$ The additional assumption in the second statement implies the existence of relatively prime monomials $\mu$ and $\nu$ in $\Delta_{\varpi_j, \varpi_j}$'s such that the multi-degree of each of $\mu \widetilde{f}$ and $\nu \widetilde{g}$ is the same as the one of $\widetilde{f+g}$. This completes the proof.
\end{proof}

\begin{definition}
For a generalized minor $\Delta_{\varpi_j, w(\varpi_j)},$ define the \textit{restricted minor} $D_{\varpi_j, w(\varpi_j)}$ to be the restriction of it to $N$.
\end{definition}
\begin{remark}
The restricted minors are the main objects of the cluster structure defined by Goodearl and Yakimov on the coordinate ring of a Schubert cell. Their lift plays a significant role in constructing the cluster structure of a partial flag variety (see \cite{GLS, GLS2, Kadhem}). Naively, one might expect that the lift of a restricted minor is always a generalized minor, but this need not be true in general.
\end{remark}

Gei{\ss}, Leclerc and Schr{\"{o}}er addressed this example of type $A$ in \cite{GLS}:

\begin{example}
Consider $G=SL_6$, a group of type $A_5$. Take $J=\{1,3 \}$, so $K=\{2,4,5 \}$. The restricted minor $D_{13,56}$ cannot be written as $D_{12...m,i_1i_2...i_m}$. Therefore, $\widetilde{D}_{13,56}$ cannot be a flag minor. However,
\begin{align*}
D_{13,56} &=D_{1,2}D_{23,56}-D_{123,256}\\
&=D_{1,2}D_{123,156}-D_{123,256}.
\end{align*}
Hence,
\begin{align*}
\widetilde{D}_{13,56} &=\Delta_{1,2} \Delta_{123,156}- \Delta_{1,1} \Delta_{123,256}\\
&=\Delta_{2} \Delta_{156}- \Delta_{1} \Delta_{256}.
\end{align*}
\end{example}
\section{Lift degree and explicit algorithm}
The main goal of this section is to solve the problem of lifting the minors that occurred at the end of the previous section. We start this section by the following proposition:

\begin{proposition}
Let $f \in \CC[N_K]$. Assume that $f=D_{\varpi_j, w(\varpi_j)}$ and $j \in J$. Then $\widetilde{f}=\Delta_{\varpi_j, w(\varpi_j)}$.
\end{proposition}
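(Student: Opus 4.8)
The plan is to use the characterization of the lift $\widetilde{f}$ established in Lemma~\ref{GLS tilde map}: the lift is the unique homogeneous element of $\CC[G/P_K^-]$ whose projection to $\CC[N_K]$ equals $f$ and whose multi-degree $\lambda(f) = \sum_{j \in J} a_j(f)\varpi_j$ is minimal with respect to $\preceq$. Since $f = D_{\varpi_j, w(\varpi_j)}$ is the restriction to $N$ of the generalized minor $\Delta_{\varpi_j, w(\varpi_j)}$, the natural candidate for the lift is $\Delta_{\varpi_j, w(\varpi_j)}$ itself. To prove the proposition I must verify two things: first, that $\Delta_{\varpi_j, w(\varpi_j)}$ projects onto $f$ under $\proj_J$; and second, that its multi-degree is exactly $\lambda(f)$, the minimal degree. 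The first point is essentially the definition of the restricted minor together with the fact that $\proj_J$ is given by quotienting by the ideal $(\Delta_{\varpi_j,\varpi_j}-1)_{j\in J}$, so I would just note that the image of $\Delta_{\varpi_j,w(\varpi_j)}$ under $\proj_J$ is its restriction, which is $D_{\varpi_j,w(\varpi_j)} = f$.

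**The heart of the matter** is the degree computation. By the remark preceding the statement, the multi-degree of the generalized minor $\Delta_{\varpi_j, w(\varpi_j)}$ in $\CC[G/P_K^-]$ is $\varpi_j$. So I need to show that $\lambda(f) = \varpi_j$, i.e. that $a_j(f) = 1$ and $a_{j'}(f) = 0$ for all $j' \in J$ with $j' \neq j$. Concretely, using the definition $a_{j'}(f) = \max\{ s \mid (e_{j'}^\dagger)^s f \neq 0 \}$ from equation~\eqref{eq1}, I would analyze the action of the right operators $e_{j'}^\dagger$ on the minor $D_{\varpi_j, w(\varpi_j)}$. The key input is that $\proj_I(L(\varpi_j))$ consists exactly of those $f \in \CC[N]$ satisfying $(e_i^\dagger)^{2} f = 0$ for $i = j$ and $e_i^\dagger f = 0$ for $i \neq j$, since $\varpi_j = \sum_i a_i \varpi_i$ has $a_j = 1$ and $a_i = 0$ otherwise. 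Because $D_{\varpi_j, w(\varpi_j)}$ lies in $\proj_I(L(\varpi_j))$ by construction, it is automatic that $e_{j'}^\dagger f = 0$ for every $j' \neq j$ (giving $a_{j'}(f) = 0$) and that $(e_j^\dagger)^2 f = 0$ with $e_j^\dagger f$ generically nonzero (giving $a_j(f) = 1$).

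**The step I expect to require the most care** is confirming that $e_j^\dagger f \neq 0$, i.e. that $a_j(f)$ is genuinely $1$ rather than $0$. The inequality $a_j(f) \leq 1$ follows formally from membership in $\proj_I(L(\varpi_j))$, but the strict nonvanishing must be argued, since if $e_j^\dagger f$ happened to vanish the minimal degree would drop below $\varpi_j$ and force a different lift. I would establish this by using the extremal weight structure of $L(\varpi_j)$: the function $D_{\varpi_j, w(\varpi_j)}$ corresponds to a weight vector of weight $w(\varpi_j)$, and $w(\varpi_j) \neq \varpi_j$ in the relevant $j$-string would force $e_j^\dagger$ to act nontrivially. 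Once the degree is pinned to $\varpi_j$, minimality in $\preceq$ is immediate because $\varpi_j$ is a minimal nonzero element of $\Pi_J$, and then uniqueness from Lemma~\ref{GLS tilde map} forces $\widetilde{f} = \Delta_{\varpi_j, w(\varpi_j)}$.
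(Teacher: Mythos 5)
Your proposal is correct and takes essentially the same route as the paper: verify that $\Delta_{\varpi_j, w(\varpi_j)}$ projects to $D_{\varpi_j, w(\varpi_j)}$, pin the minimal lift degree down to $\varpi_j$, and conclude by the uniqueness in Lemma \ref{GLS tilde map}. Your computation of the $a_{j'}(f)$ via the operators $e_{j'}^{\dagger}$ merely unfolds the paper's shorter poset observation (the only degrees $\preceq \varpi_j$ are $0$ and $\varpi_j$), and your care about $e_j^{\dagger} f \neq 0$ addresses the same non-constancy point the paper dispatches with ``as it is 0 otherwise'' (both arguments tacitly assume $w(\varpi_j) \neq \varpi_j$).
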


\begin{proof}
It is obvious that $\Delta_{\varpi_j, w(\varpi_j)}$ is an element that projects to $D_{\varpi_j, w(\varpi_j)}$ such that its degree is $\varpi_j$. Now, we may assume that there is an element $g \in \CC[G/P_K^-]$ whose projection $\proj_J(g)=D_{\varpi_j, w(\varpi_j)}$ and whose degree $\lambda_g \preceq \varpi_j$. By equation (\ref{eq2}), the only possibility is $\lambda_g=a_j \varpi_j$ and $a_j \leq 1$. This forces $\lambda_g$ to be $\varpi_j$, as it is 0 otherwise. By uniqueness, $g=\Delta_{\varpi_j, w(\varpi_j)}$.
\end{proof}

\begin{lemma}
Assume that $f, g \in \CC[G/P_K^-]$ such that $\textnormal{proj}_J (f)= \textnormal{proj}_J (g)$ and the multi-degree of both is minimal with respect to the partial ordering $\preceq$ of weights. Then $f=g$.
\end{lemma}

\begin{proof}
Using equation (\ref{localization}), there are natural numbers $a_j$ and $b_j$ such that
$$\proj_J (f) = \dfrac{f}{\prod_{j \in J} \Delta_{\varpi_j,\varpi_j}^{a_j}} \quad \quad \textnormal{and} \quad \quad \proj_J (g)=\dfrac{g}{\prod_{j \in J} \Delta_{\varpi_j,\varpi_j}^{b_j}}.$$
The minimality gives exactly two possibilities, either $f=g$ or $\lambda(f)$ and $\lambda(g)$ are incomparable. The uniqueness of the tilde map makes the latter impossible. Hence, $f=g$ and consequently $a_j=b_j$.
\end{proof}
This allows to rewrite equation (\ref{localization}) as:
\begin{equation}\label{localization 2}
  \CC[N_K]=\left\{ \proj_J(f)=\dfrac{f}{\prod_{j \in J} \Delta_{\varpi_j,\varpi_j}^{a_j}} \mid f\in L \bigg(\sum_{j\in J} a_j \varpi_j \bigg) \textnormal{ and $a_j$ is minimal} \right\}.
  \end{equation}
  
\begin{corollary} \label{minor 1}
If $j\in J$, then the restricted minor $D_{\varpi_j,w\varpi_j}$ is given by
$$D_{\varpi_j,w\varpi_j}=\dfrac{\Delta_{\varpi_j,w\varpi_j}}{\Delta_{\varpi_j,\varpi_j}}.$$
\end{corollary}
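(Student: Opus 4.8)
The plan is to read off the result as an immediate consequence of the preceding Proposition together with the explicit description of the projection map recorded in equation~(\ref{localization 2}). Since $j \in J$, the restricted minor $D_{\varpi_j, w\varpi_j}$ lies in $\CC[N_K]$, and by the preceding Proposition its lift is $\widetilde{D}_{\varpi_j, w\varpi_j} = \Delta_{\varpi_j, w\varpi_j}$, a homogeneous element of $\CC[G/P_K^-]$ whose multi-degree is $\varpi_j$ and is minimal with respect to $\preceq$.

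First I would invoke the defining property of the tilde map from Lemma~\ref{GLS tilde map}, namely that $\proj_J(\widetilde{f}) = f$, applied to $f = D_{\varpi_j, w\varpi_j}$. This gives $\proj_J(\Delta_{\varpi_j, w\varpi_j}) = D_{\varpi_j, w\varpi_j}$, and it then only remains to compute the left-hand side explicitly. For this I would use the rewritten localization~(\ref{localization 2}), which asserts that a homogeneous element $g \in L\big(\sum_{k \in J} a_k \varpi_k\big)$ with the exponents $a_k$ minimal projects to $g \big/ \prod_{k \in J} \Delta_{\varpi_k, \varpi_k}^{a_k}$. Applying this to $g = \Delta_{\varpi_j, w\varpi_j}$, whose multi-degree $\varpi_j$ corresponds to $a_j = 1$ and $a_k = 0$ for all $k \in J \setminus \{j\}$, the product in the denominator collapses to the single factor $\Delta_{\varpi_j, \varpi_j}$. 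Combining the two identities yields
$$D_{\varpi_j, w\varpi_j} = \proj_J(\Delta_{\varpi_j, w\varpi_j}) = \frac{\Delta_{\varpi_j, w\varpi_j}}{\Delta_{\varpi_j, \varpi_j}},$$
which is the claim.

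The argument is essentially formal once these two ingredients are assembled, so I do not expect a serious obstacle. The only point requiring care is the minimality of the exponent $a_j = 1$, which is exactly what guarantees that~(\ref{localization 2}) applies with no extra power of $\Delta_{\varpi_j, \varpi_j}$ in the denominator. This minimality is precisely what the preceding Proposition supplies, since it identifies $\varpi_j$ as the minimal degree (with respect to $\preceq$) of a lift of $D_{\varpi_j, w\varpi_j}$; without that input one could a priori only write the quotient up to an ambiguous monomial in the $\Delta_{\varpi_k, \varpi_k}$.
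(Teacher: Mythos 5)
Your proposal is correct and follows exactly the route the paper intends: the corollary is stated without proof precisely because it is the immediate combination of the preceding Proposition (which gives $\widetilde{D}_{\varpi_j,w\varpi_j}=\Delta_{\varpi_j,w\varpi_j}$ with minimal degree $\varpi_j$) and the rewritten localization~(\ref{localization 2}), which is what you assembled. Your closing remark on why minimality of $a_j=1$ is the essential input is also the right observation.
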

  
\begin{proposition} \label{GLS tilde map 2}
Let $\dfrac{f}{\prod_{j \in J} \Delta_{\varpi_j,\varpi_j}^{a_j}}$ be an element of $\CC[N_K]$ in which $a_j$ is minimal for each $j$. Then
$$\text{\test{{\Big(\dfrac{f}{\prod_{j \in J} \Delta_{\varpi_j,\varpi_j}^{a_j}}\Big)}}}=f.$$
\end{proposition}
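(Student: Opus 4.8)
The plan is to check that the numerator $f$ already satisfies the two properties that characterize the lift of $h:=\frac{f}{\prod_{j\in J}\Delta_{\varpi_j,\varpi_j}^{a_j}}$ in Lemma \ref{GLS tilde map}. By the rewritten localization \eqref{localization 2}, the element $f\in L(\lambda)$ with $\lambda=\sum_{j\in J}a_j\varpi_j$ is a \emph{homogeneous} element of $\CC[G/P_K^-]$ satisfying $\proj_J(f)=h$. Thus $f$ is one homogeneous lift of $h$, and the whole proposition reduces to showing that its multi-degree $\lambda$ is minimal with respect to $\preceq$; once this is known, the uniqueness clause of Lemma \ref{GLS tilde map} (equivalently, the injectivity of $\proj_J$ on each piece $L(\lambda)$) forces $f=\widetilde{h}$.

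To prove the minimality, I would argue by contradiction and compare $f$ with the actual lift. Write $\widetilde{h}\in L(\lambda(h))$, where $\lambda(h)=\sum_{j\in J}c_j\varpi_j$ is the minimal degree produced by Lemma \ref{GLS tilde map}; since $f\in L(\lambda)$ projects to $h$, this minimality already gives $\lambda(h)\preceq\lambda$. Suppose, for contradiction, that $\lambda(h)\prec\lambda$. Applying \eqref{localization 2} to $\widetilde{h}$ yields a second expression $h=\frac{\widetilde{h}}{\prod_{j\in J}\Delta_{\varpi_j,\varpi_j}^{c_j}}$ with $c_j\le a_j$ for all $j$ and $c_{j_0}<a_{j_0}$ for some $j_0$. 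Multiplying $\widetilde{h}$ by the appropriate monomial in the $\Delta_{\varpi_j,\varpi_j}$ (the Cartan product that raises the multi-degree) then produces a homogeneous representative of $h$ lying in $L(\lambda-\varpi_{j_0})$, which exhibits $h$ with the exponent $a_{j_0}$ strictly lowered, contradicting the assumed minimality of the $a_j$. Hence $\lambda(h)=\lambda$.

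With $\lambda(h)=\lambda$ established, both $f$ and $\widetilde{h}$ lie in the single piece $L(\lambda)$ and project to $h$, so $f=\widetilde{h}$ by injectivity of $\proj_J|_{L(\lambda)}$, which is exactly the asserted identity. I expect the one genuinely delicate point to be the middle paragraph: converting the strict weight inequality $\lambda(h)\prec\lambda$ into an honest reduction of a single exponent $a_{j_0}$. This step relies on the fact that $\preceq$ is the componentwise order on $\Pi_J\cong\NN^J$, so that any representative of degree $\prec\lambda$ can be pushed up one fundamental weight at a time into $L(\lambda-\varpi_{j_0})$, together with the compatibility between multiplication by $\Delta_{\varpi_j,\varpi_j}$ in $\CC[G/P_K^-]$ and the denominators appearing in the localization \eqref{localization}.
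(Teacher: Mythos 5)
Your proposal is correct and follows essentially the same route as the paper's own proof: exhibit $f$ as a homogeneous preimage of $h$ of degree $\lambda=\sum_{j\in J}a_j\varpi_j$ via \eqref{localization 2}, identify minimality of the exponents $a_j$ with $\preceq$-minimality of $\lambda$, and conclude $f=\widetilde{h}$ from the uniqueness clause of Lemma \ref{GLS tilde map}. The only difference is that the paper treats the middle step as immediate (``the result now follows by the minimality of the $a_j$'s and the uniqueness of the GLS tilde map''), whereas you prove it by the contradiction argument of multiplying $\widetilde{h}$ by a monomial in the $\Delta_{\varpi_j,\varpi_j}$ to lower an exponent --- a worthwhile elaboration, and a correct one, but not a different method.
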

\begin{proof}
By (\ref{localization 2}) it is clear that $f\in L \bigg(\sum_{j\in J} a_j \varpi_j \bigg)$. Obviously, $$\proj_J(f)=\dfrac{f}{\prod_{j \in J} \Delta_{\varpi_j,\varpi_j}^{a_j}}.$$
The result now follows by the minimality of the $a_j$'s and the uniqueness of the GLS tilde map.
\end{proof}


For $f \in \CC[N_K]$, recall the definition of $a_j(f)$ given in \ref{eq1}.

\begin{definition}\label{lift deg}
For a restricted minor $D_{\varpi_{i_n}, w(\varpi_{i_n})}$, define the \textit{lift degree} to be the integer $d_n$ in the equation:
\begin{equation}\label{lift deg eq}
    s_{i_1}(s_{i_2}...s_{i_n})(\varpi_{i_n})=s_{i_2}...s_{i_n}(\varpi_{i_n})-d_{n} \alpha_{i_1},
\end{equation}
where $w=s_{i_1}s_{i_2}...s_{i_n}$ and $\alpha_{i_1}$ is the vertex of the Dynkin diagram indexed by $i_1$.
\end{definition}

\begin{proposition} Assume the setting and notation of Definition (\ref{lift deg}). If $J=\{i_1\}$, then
the lift $\widetilde{D}_{\varpi_{i_n}, w(\varpi_{i_n})}$ of the minor $D_{\varpi_{i_n}, w(\varpi_{i_n})}$ is of degree $d_n \varpi_{i_1}$.
\end{proposition}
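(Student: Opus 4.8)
The plan is to prove part (2) first and then extract part (1) by a sandwich argument, since both rest on the same identity. Writing $v=s_{i_2}\cdots s_{i_n}$, the defining equation (\ref{lift deg eq}) just says $w(\varpi_{i_n})=v(\varpi_{i_n})-d_n\alpha_{i_1}$, so pairing with $\alpha_{i_1}^\vee$ gives $d_n=\langle v\varpi_{i_n},\alpha_{i_1}^\vee\rangle$ and hence $\langle w\varpi_{i_n},\alpha_{i_1}^\vee\rangle=-d_n$. I would record at the outset that $w=s_{i_1}v$ is reduced, so $v^{-1}\alpha_{i_1}>0$ and therefore $d_n\ge 0$; genuineness of the minor (that $s_{i_1}$ actually lowers the extremal weight $v\varpi_{i_n}$, equivalently that $w$ is the minimal-length representative of its coset modulo the stabilizer $\mathrm{Stab}(\varpi_{i_n})=\langle s_k:k\neq i_n\rangle$) upgrades this to $d_n\ge 1$.

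For part (2) I would compute the multidegree through the description of the lift in Lemma \ref{GLS tilde map}, namely $\lambda(\widetilde f)=\sum_j a_j(f)\varpi_j$ with $a_j(f)=\max\{s:(e_j^\dagger)^s f\neq 0\}$. The crucial point is that the right action annihilates $f=D_{\varpi_{i_n},w\varpi_{i_n}}$ in every direction except $i_n$. Realizing the minor as the matrix coefficient $n\mapsto\langle n\,\overline{w}\,v^+,\xi^+\rangle$ on $N$, where $v^+$ is the highest weight vector of $L(\varpi_{i_n})$ and $\xi^+$ its dual covector, the derivation $e_j^\dagger$ extracts the coefficient of $v^+$ in $e_j\cdot(\,\cdot\,)$; since $\varpi_{i_n}-\alpha_j$ is a weight of the fundamental module $L(\varpi_{i_n})$ only for $j=i_n$, that coefficient vanishes for $j\neq i_n$, so $a_j(f)=0$ for $j\neq i_n$. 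Hence the lift degree is $a_{i_n}(f)\,\varpi_{i_n}$, and it remains to identify $a_{i_n}(f)$ with $d_n$: the powers $(e_{i_n}^\dagger)^s$ climb the $\alpha_{i_n}$-string through the extremal weight $w\varpi_{i_n}$, whose upward length is $-\langle w\varpi_{i_n},\alpha_{i_n}^\vee\rangle=d_n$ in the relevant case $i_1=i_n$. This yields degree $d_n\varpi_{i_n}=d_n\varpi_{i_1}$.

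Part (1) then follows by sandwiching $d_n$ between $1$ and $1$. The lower bound $d_n\ge 1$ is the minimality input noted above. For the upper bound I would use that $f\in\proj_I\!\big(L(\varpi_{i_n})\big)$, which by the description recalled inside the proof of Lemma \ref{GLS tilde map} is exactly $\{g:(e_m^\dagger)^{a_m+1}g=0,\ m\in I\}$ for $\varpi_{i_n}=\sum_m a_m\varpi_m$; since $a_{i_n}=1$ here, $(e_{i_n}^\dagger)^2 f=0$, forcing $a_{i_n}(f)\le 1$ and hence $d_n\le 1$. Combining the two bounds gives $d_n=1$.

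The main obstacle is the identification $a_{i_n}(f)=d_n$, that is, showing that the maximal surviving power of $e_{i_n}^\dagger$ on the \emph{specific} matrix coefficient $f$ equals the string length $-\langle w\varpi_{i_n},\alpha_{i_n}^\vee\rangle$ rather than merely being bounded by it. This is precisely where one must use that $f$ is an extremal (not a generic) weight vector and that $w$ is a minimal coset representative, exactly the hypotheses that fail for a word such as $s_1s_2s_1$ applied to $\varpi_1$, where the leading $s_{i_1}$ fixes $v\varpi_{i_n}$ and $d_n=0$. I expect to settle this identity by an explicit $\mathfrak{sl}_2$-computation along $\alpha_{i_n}$, or inductively by peeling off $s_{i_1}$ and using that $e_{i_n}^\dagger$ is a derivation (Lemma \ref{multip.}); once it is in hand, both statements drop out.
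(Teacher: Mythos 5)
Your argument has a genuine gap, and it sits exactly where you put the main weight: in the claim that ``the right action annihilates $f=D_{\varpi_{i_n},w\varpi_{i_n}}$ in every direction except $i_n$,'' so that the lift degree is $a_{i_n}(f)\,\varpi_{i_n}$. This cannot be the right answer for the object the proposition is about. The lift $\widetilde{f}$ lives in $\CC[G/P_K^-]=\bigoplus_{\lambda\in\Pi_J}L(\lambda)$, so its multi-degree is an $\NN$-combination of the $\varpi_j$ with $j\in J$; in the typical case $i_n\in K$, a nonzero multiple of $\varpi_{i_n}$ is not an admissible degree at all, so your conclusion would force every such minor to lift in degree $0$, i.e.\ to be constant. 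The paper's examples show what actually happens: in Example~\ref{explicit type B} the lift of $D_{\varpi_2,s_3s_2\varpi_2}$ (here $i_n=2\in K$, $i_1=3\in J$) has degree $2\varpi_3$, so $a_3=2\neq 0$ although $3\neq i_n$, and in Example~\ref{explicit type A} the lift of $D_{\varpi_2,s_1s_2\varpi_2}$ has degree $\varpi_1$. The source of the discrepancy is that your matrix-coefficient computation is carried out in $\CC[N]$: it correctly computes the exponents governing the lift to the \emph{full} flag variety (the map $\proj_I$), but not the exponents relevant here. The proposition concerns the minor as an element of $\CC[N_K]$, which is identified with the subalgebra $\{g\in\CC[N]\mid e_k^\dagger g=0,\ k\in K\}$; when $i_n\in K$ the restricted minor does not even lie in this subalgebra (by your own computation $e_{i_n}^\dagger$ does not kill it), so before applying Lemma~\ref{GLS tilde map} one must replace it by the corresponding invariant representative, and it is the $e_j^\dagger$-derivatives, $j\in J$, of \emph{that} function which determine the degree. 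This is precisely the content of the identity asserted in the paper's proof, namely $a_l(D_{\varpi_{i_n},w\varpi_{i_n}})=d_n$ for $l=i_1$ and $a_l=0$ for the other $l\in J$ (the vanishing coming from $\overline{u}x_j(t)\overline{u}^{-1}\in N$ when $\ell(us_i)>\ell(u)$ and $i\neq j$); note that the distinguished direction there is $i_1$, the first letter of the word, not $i_n$.

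As a consequence, your proof of (2) covers at best the special case $i_1=i_n$, and your proof of (1) inherits the problem: the upper bound $d_n\le 1$ rests on identifying $d_n$ with $a_{i_n}(f)$ computed in $\CC[N]$, which is exactly the identification you yourself flag as the ``main obstacle'' and leave unresolved, while the lower bound $d_n\ge 1$ imports an extra hypothesis (that $w$ is the minimal-length coset representative modulo the stabilizer of $\varpi_{i_n}$) that is not among the proposition's assumptions. Your observation that a reduced word such as $s_1s_2s_1$ acting on $\varpi_1$ makes the defining equation (\ref{lift deg eq}) return $d_n=0$ is a genuine subtlety --- the paper's one-line justification of (1) does not address it either --- but noticing it does not repair the argument: to prove the proposition as stated one still needs the two identities above for the element of $\CC[N_K]$, which is what the paper's proof (tersely) supplies and your proposal does not.
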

\begin{proof}
This follows from the fact that
\begin{align*}
a_l(D_{\varpi_{i_n}, w(\varpi_{i_n})}) &= \max \left\{ s \mid (e_l^{\dagger})^s D_{\varpi_{i_n}, w(\varpi_{i_n})} \neq 0 \right\}\\
&=
\begin{cases}
d_n, \quad &\textnormal{if }{l=i_1}, \\
0, &\textnormal{otherwise;}
\end{cases}
\end{align*}
where $a_l$ is as introduced in Equation (\ref{eq1}). In fact, the first statement is clear and the second follows from the fact that $\Bar{u}x_j(t)\Bar{u}^{-1} \in N$, when $\ell(us_i) > \ell (u)$ and $i \neq j$.
\end{proof}
\begin{corollary} \label{minor 2}
Assume the setting and notation of Definition (\ref{lift deg}). If $J=\{i_1\}$, then there is a unique $f \in \CC[G/P_K^-]$ of homogeneous degree $d_n\varpi_{i_1}$ such that
$$D_{\varpi_{i_n}, w(\varpi_{i_n})}=\dfrac{f}{\Delta_{\varpi_{i_1}, \varpi_{i_1}}^{d_n}}.$$
\end{corollary}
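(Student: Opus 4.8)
The plan is to show that the sought-after $f$ is nothing other than the tilde lift itself, namely $f := \widetilde{D}_{\varpi_{i_n}, w(\varpi_{i_n})}$, and then to read the asserted identity directly off the localization description of $\CC[N_K]$. First I would invoke the preceding Proposition, whose part (2) computes the multidegree of this lift to be $d_n \varpi_{i_1}$. In particular $f$ lies entirely in the single graded piece $L(d_n \varpi_{i_1})$ of $\CC[G/P_K^-] = \bigoplus_{\lambda \in \Pi_J} L(\lambda)$; since this degree must belong to $\Pi_J$, the index $i_1$ lies in $J$ whenever $d_n > 0$, so that $\Delta_{\varpi_{i_1}, \varpi_{i_1}}$ is a genuine localization element and the quotient on the right-hand side makes sense.

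For existence, recall that by the very definition of the tilde lift (Lemma \ref{GLS tilde map}) one has $\proj_J(f) = D_{\varpi_{i_n}, w(\varpi_{i_n})}$. The next step is to translate this projection into a quotient via the localization formula (\ref{localization}): a homogeneous element $g \in L\!\left(\sum_{j \in J} a_j \varpi_j\right)$ satisfies $\proj_J(g) = g / \prod_{j \in J} \Delta_{\varpi_j, \varpi_j}^{a_j}$. Applying this to $g = f$, whose degree is concentrated at the single index $i_1$ with multiplicity $d_n$, the product over $J$ collapses to the single factor $\Delta_{\varpi_{i_1}, \varpi_{i_1}}^{d_n}$, which yields exactly
$$D_{\varpi_{i_n}, w(\varpi_{i_n})} = \frac{f}{\Delta_{\varpi_{i_1}, \varpi_{i_1}}^{d_n}}.$$
The minimality of the exponent $d_n$, needed so that this is the reduced form occurring in (\ref{localization 2}), is guaranteed by the minimality of the lift degree built into Lemma \ref{GLS tilde map} together with the degree computation in the preceding Proposition.

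For uniqueness I would suppose $g \in \CC[G/P_K^-]$ is homogeneous of degree $d_n \varpi_{i_1}$ and also satisfies $g / \Delta_{\varpi_{i_1}, \varpi_{i_1}}^{d_n} = D_{\varpi_{i_n}, w(\varpi_{i_n})}$. Dividing by the same denominator shows $\proj_J(g) = \proj_J(f) = D_{\varpi_{i_n}, w(\varpi_{i_n})}$, while both $f$ and $g$ live in the common piece $L(d_n \varpi_{i_1})$. Since the restriction of $\proj_J$ to each graded piece is injective (the Remark asserting the injection $L(\lambda) \hookrightarrow \CC[N_K]$), it follows that $g = f$, which is the desired uniqueness.

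This corollary is essentially a repackaging of the preceding Proposition, so I do not anticipate a genuine obstacle; the single point demanding care is the bookkeeping in the middle step, where one must confirm that a degree concentrated at $i_1$ projects to the single denominator $\Delta_{\varpi_{i_1}, \varpi_{i_1}}^{d_n}$ rather than to a product ranging over all of $J$. A minor additional check is the degenerate possibility $d_n = 0$, in which case $f$ is a nonzero constant, $i_1$ plays no role, and the identity reduces to the normalization already encoded in $\proj_J$; this case is either excluded by the reduced-word setup or dispatched trivially.
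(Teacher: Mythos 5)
Your proof is correct and takes essentially the same route the paper intends: the corollary is stated without proof precisely because it follows, as you argue, by taking $f$ to be the lift $\widetilde{D}_{\varpi_{i_n}, w(\varpi_{i_n})}$, whose multi-degree $d_n\varpi_{i_1}$ is part (2) of the preceding Proposition, then reading the displayed identity off the localization description \eqref{localization 2} (equivalently Proposition \ref{GLS tilde map 2}), with uniqueness coming from the injectivity of $\proj_J$ restricted to the graded piece $L(d_n\varpi_{i_1})$. Your extra bookkeeping---that $i_1 \in J$ whenever $d_n > 0$ and the trivial case $d_n = 0$---is consistent with the paper's setting and does not change the argument.
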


\section{An application to cluster algebras}
We start this section with some setup and then apply the results of the previous section to the cluster algebra of the homogeneous coordinate ring a partial flag variety defined in \cite{Kadhem}.\\

\begin{definition}\label{Cell cluster}
Let $w=s_{i_1}...s_{i_n} \in W$. Define the functions
$$p(k):=\begin{cases}
\textnormal{max}\{j < k \textnormal{ }| \textnormal{ } i_j=i_k \},& \text{if such } j \text{ exists;}\\
- \infty, & \text{otherwise.}
\end{cases}$$
$$s(k):=\begin{cases}
\textnormal{min}\{j > k \textnormal{ }| \textnormal{ } i_j=i_k \},& \text{if such } j \text{ exists;}\\
\infty, & \text{otherwise.}
\end{cases}$$
Also, set $$S(w):=\{i \in I \mid s_i \leq w \}=\{i \in I \mid i=i_k \textnormal{ for some } k \in [1,m] \}.$$
In \cite{Kadhem}, it is shown that the work of Goodearl and Yakimov gives a canonical cluster structure in $\CC[N_K]$
in which its initial exchange matrix $\widetilde{B}^w$ is of size $m \times (m-|S(w)|)$ and its $j \times k$ entry is given by
$$(\widetilde{B}^w)_{jk} = \begin{cases}
1, & \text{if } j=p(k), \\
-1, & \text{if } j=s(k),\\
a_{i_j i_k}, & \text{if } j<k<s(j)<s(k),\\
-a_{i_j i_k}, & \text{if } k<j<s(k)<s(j),\\
0, & \text{otherwise;}
\end{cases}$$
where the entry $a_{i_j i_k}$ is the same $i_j \times i_k$ entry of the Cartan matrix of the same type. The initial cluster variables are $D_{\varpi_{i_k},w_{\leq k} \varpi_{i_k}}$. The frozen ones are exactly those indexed by the $k$'s that satisfy $s(k)=\infty$.
\end{definition}

\begin{remark} \label{cluster mutation}
Let $(\widetilde{{{\textnormal{\textbf{x}}}}},\widetilde{B})$ be a seed of the cluster algebra $\mathcal{A}= \CC[N_K]$. Mutate at $k$ to get the exchange relation
$$x_k x_k' = M_k + L_k,$$
where $M_k,L_k$ are monomials in the variables $x_1,...,x_{k-1},x_{k+1},...,x_n.$ As shown in ~\cite{GLS}, this lifts to the equation
$$\widetilde{x_k x'_k}=\mu_k\widetilde{M_k}+\nu_k \widetilde{L_k},$$
where $\mu_k$ and $\nu_k$ are relatively prime monomials in the vatiables $\Delta_{{\varpi_{j}},{\varpi_{j}}}$, where $j$ runs in $J$. This means that $\mu_k$ and $\nu_k$ can be written as
$$\mu_k = \prod_{j \in J} \Delta_{{\varpi_{j}},{\varpi_{j}}}^{\alpha_j} \quad \textnormal{ and } \quad \nu_k = \prod_{j \in J} \Delta_{ {\varpi_{j}}, {\varpi_{j}}}^{\beta_j},$$
where $\min \{\alpha_j, \beta_j \}=0$ for all $j$.\\

Note here that the monomial $M_k$ is the one corresponding to the positive $b_{ik}$'s in Definition \ref{mutation}, while the monomial $L_k$ is the one corresponding to the negative $b_{ik}$'s.
\end{remark}

\begin{definition} \label{main definition}
Let $(\textnormal{\textbf{x}},{B})$ be a seed of the cluster algebra $\mathcal{A}_J=\CC [N_K]$. Define the pair $(\widehat{\textnormal{\textbf{x}}},\widehat{B})$ as follows:
\begin{itemize}
    \item The tuple $\widehat{\textnormal{\textbf{x}}}$ consists of the variables $\widetilde{x}$ induced by lifting each variable $x$ of $\textnormal{\textbf{x}}$ and consists also of the generalized minors $\Delta _{{\varpi_j}, {\varpi_j}}$ modded out in $\CC[N_K]$. The variables $\widetilde{x}$ preserve the same type of the variables $x$ (mutable or frozen), while the minors $\Delta _{{\varpi_j}, {\varpi_j}}$ are frozen.
    \item The matrix $\widehat{B}$ is given as follows:
Extend the matrix $B$ of the initial seed of $\mathcal{A}_J$ by $|J|$ rows labeled by the elements of $J$ such that the entries are
\[
  \widehat{b}_{jk} =
  \begin{cases}
\beta_j , & \text{if $\beta_j \neq 0$;} \\
- \alpha_j,& \text{else,}
  \end{cases}
  \]
where $\alpha_j$ and $\beta_j$ are as in Remark \ref{cluster mutation}.
\end{itemize}
\end{definition}

\noindent The combination of the results of the previous section together with the results of \cite{Kadhem} gives us the following theorem:

\begin{theorem}\label{main}
Let $(\textnormal{\textbf{x}},{B})$ be the initial seed of the cluster algebra $\mathcal{A}=\CC[N_K]$ described in Definition \ref{Cell cluster}. Then $\mathcal{A}$ lifts to a cluster algebra $\widehat{\mathcal{A}} \subset \CC[G/P_K^-]$ whose initial exchange cluster is $(\widehat{\textnormal{\textbf{x}}},\widehat{B})$. In particular, the extended cluster variables are $\big\{ \widetilde{ D}_{\varpi_{i_k},w_{\leq k} \varpi_{i_k}}  \big\} \sqcup \{ \Delta_{\varpi_j, \varpi_j} \mid j\in J \}$.
If $J=\{i_1\}$, then for each $D_{\varpi_{i_k},w_{\leq k} \varpi_{i_k}}$, there exists a unique $f \in \CC[G/P_K^-]$ of degree $d_k \varpi_{i_1}$ such that
$$\widetilde{ D}_{\varpi_{i_k},w_{\leq k} \varpi_{i_k}}=f.$$
\end{theorem}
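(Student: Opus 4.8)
The plan is to read Theorem \ref{main} as the superposition of two independent inputs: the existence of the lifted cluster structure, which is supplied by \cite{Kadhem}, and the explicit description of the initial data, which is supplied by the computations of Section 4. Accordingly I would split the statement into three claims and treat them in turn: (a) that the pair $(\widehat{\textnormal{\textbf{x}}},\widehat{B})$ of Definition \ref{main definition} is the initial seed of a cluster algebra $\widehat{\mathcal{A}}\subset\CC[G/P_K^-]$ whose image under $\proj_J$ is $\mathcal{A}=\CC[N_K]$; (b) that its extended cluster consists exactly of the lifts $\widetilde{D}_{\varpi_{i_k},w_{\leq k}\varpi_{i_k}}$, each a single homogeneous element of degree $d_k\varpi_{i_1}$, together with the frozen minors $\Delta_{\varpi_j,\varpi_j}$; and (c) that this homogeneous element is the generalized minor $\Delta_{\varpi_{i_k},w_{\leq k}\varpi_{i_k}}$ whenever $i_k\in J$.

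For (a) I would start from the initial seed of $\mathcal{A}$ recorded in Remark \ref{Cell cluster}, whose mutable and frozen cluster variables are the restricted minors $D_{\varpi_{i_k},w_{\leq k}\varpi_{i_k}}$, and form $(\widehat{\textnormal{\textbf{x}}},\widehat{B})$ as in Definition \ref{main definition}. The crucial point is that $\proj_J$ intertwines this candidate seed with the cell seed, sending each $\widetilde{x}$ to $x$ and each $\Delta_{\varpi_j,\varpi_j}$ to $1$. I would then check that a single mutation at $k$ is compatible with this projection: by Remark \ref{cluster mutation} the exchange relation $x_kx_k'=M_k+L_k$ of $\mathcal{A}$ lifts to $\widetilde{x_kx_k'}=\mu_k\widetilde{M_k}+\nu_k\widetilde{L_k}$, where $\mu_k,\nu_k$ are the relatively prime monomials in the $\Delta_{\varpi_j,\varpi_j}$ furnished by Lemma \ref{multip.}, and the extra $J$-rows of $\widehat{B}$ are defined precisely to record their exponents $\alpha_j,\beta_j$. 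Verifying that the mutation rule applied to $\widehat{B}$ at $k$ reproduces this lifted relation then shows that mutation in $\widehat{\mathcal{A}}$ and in $\mathcal{A}$ commute with $\proj_J$, so the cluster algebra generated by $(\widehat{\textnormal{\textbf{x}}},\widehat{B})$ lives in $\CC[G/P_K^-]$ and projects onto $\mathcal{A}$; this is the structural content carried over from \cite{Kadhem}.

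Claims (b) and (c) are then immediate from Section 4. The identification of the extended cluster is exactly Definition \ref{main definition} applied to the seed of Remark \ref{Cell cluster}; that each lift is a \emph{unique} homogeneous element of degree $d_k\varpi_{i_1}$ is Corollary \ref{minor 2}, whose uniqueness is guaranteed by Lemma \ref{GLS tilde map}. For (c), when $i_k\in J$ I would combine Corollary \ref{minor 1}, which writes $D_{\varpi_{i_k},w_{\leq k}\varpi_{i_k}}=\Delta_{\varpi_{i_k},w_{\leq k}\varpi_{i_k}}/\Delta_{\varpi_{i_k},\varpi_{i_k}}$ with minimal denominator, with Proposition \ref{GLS tilde map 2} to conclude $\widetilde{D}_{\varpi_{i_k},w_{\leq k}\varpi_{i_k}}=\Delta_{\varpi_{i_k},w_{\leq k}\varpi_{i_k}}$. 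Its multi-degree is then $\varpi_{i_k}$, which is the specialization of the general formula $d_k\varpi_{i_1}$ in the case where the last letter is a left descent, i.e.\ the case $d_k=1$ recorded in the proposition underlying Corollary \ref{minor 2}.

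I expect the main obstacle to lie in (a), and specifically in justifying that the additive lifting formula of Lemma \ref{multip.} applies not only to the initial exchange relation but to every exchange relation met along arbitrary sequences of mutations; that is, that the hypothesis $a_j(M_k+L_k)=\max\{a_j(M_k),a_j(L_k)\}$ persists under mutation. This persistence is exactly what keeps the monomials $\mu_k,\nu_k$ relatively prime and makes $\widehat{B}$ mutate compatibly with the tilde map, and it is the substantive fact imported from \cite{Kadhem}; once it is granted, the remaining verifications reduce to bookkeeping on the matrix $\widehat{B}$ and the weights of the initial minors.
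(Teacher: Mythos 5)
Your proposal matches the paper's own (very brief) proof in both structure and substance: the paper offers no argument beyond the sentence that the theorem follows by combining the results of Section 4 (Corollaries \ref{minor 1} and \ref{minor 2}, Proposition \ref{GLS tilde map 2}) with the lifting theorem of \cite{Kadhem}, which is precisely your decomposition into (a), (b), (c). Your closing observation that the real burden is the persistence of the condition $a_j(M_k+L_k)=\max\{a_j(M_k),a_j(L_k)\}$ along arbitrary mutation sequences is also where the paper implicitly places it, namely on \cite{Kadhem} together with Lemma \ref{multip.} and Remark \ref{cluster mutation}.
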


\begin{remark}
For a full generality of type $A$, that is, if $J$ is any nonempty subset of $I$, one can use the following fact: If $Q$ is a parabolic subgroup between $P$ and $G$, then
    $$w_0 w_{P,0}^{-1} = (w_0 w_{Q,0}^{-1})   (w_{Q,0} w_{P,0}^{-1})$$
where $w_{P,0}$ and $w_{Q,0}$ are the longest element of the arbitrary parabolic subgroups $P$ and $Q$ respectively. This will be length-additive, meaning that
$$\textnormal{dim} (G/P) = \textnormal{dim} (G/Q) + \textnormal{dim} (Q/P).$$
So, everything can be reduced to the known cases, for example when $P$ is maximal. Another useful idea, for type $A$, is to use the Plucker relation together with the fact that the generalized minor is nothing but a flag minor. For instance, see Example 10.3 in \cite{GLS}.
\end{remark}

\section{Examples}
The results of the previous sections can be used to get these explicit examples.

\begin{example}[Type $A$, c.f. \cite{GLS}]\label{explicit type A}
Let $G$ be a semisimple algebraic group of type $A_4$. That is, $G=SL_6$. Take $J=\{2\}$ and $K=\{1,3,4\}$. We can express the longest word $w_0$ as
$$w_0=s_1s_3s_4s_3\boldsymbol{s_2s_3s_4s_1s_2s_3}.$$
The subword $w_K=s_2s_3s_4s_1s_2s_3$ generates $N_K$. By definition, one can easily see that
$$s(1)=5, \quad s(2)=6, \quad s(3)=s(4)=s(5)=s(6)=\infty.$$
Therefore, the list of extended cluster variables is
\begin{align*}
j=1 &\implies D_{\varpi_{2},s_2 \varpi_{2}}; & \textnormal{(mutable)}\\
j=2 &\implies D_{\varpi_{3},s_2s_3 \varpi_{3}}; & \textnormal{(mutable)}\\
j=3 &\implies D_{\varpi_{4},s_2s_3s_4 \varpi_{4}}; & \textnormal{(frozen)}\\
j=4 &\implies D_{\varpi_{1},s_2s_3s_4s_1\varpi_{1}}; & \textnormal{(frozen)}\\
j=5 &\implies D_{\varpi_{2},s_2s_3s_4s_1s_2\varpi_{2}}; & \textnormal{(frozen)}\\
j=6 &\implies D_{\varpi_{3},s_2s_3s_4s_1s_2s_3 \varpi_{3}}. & \textnormal{(frozen)}\\
\end{align*}
The exchange matrix $B$ is
 \[
{B}=\begin{blockarray}{ccc}
 1 & 2  \\
\begin{block}{(cc)c}
  0 & -1 & 1 \\
  1 & 0 & 2 \\
  \cmidrule(lr){1-2}
  0 & 1 & 3 \\
  1 & 0 & 4 \\
  -1 & 1 & 5 \\
  0 & -1 & 6 \\
\end{block}
\end{blockarray}
\quad .\]
Now, we use the tilde map to lift each extended cluster variable to get the cluster algebra $\widehat{\mathcal{A}} \subset \CC[G/P_K^-]$ that has the following extended cluster variables
\begin{align*}
& \widetilde{D}_{\varpi_{2},s_2 \varpi_{2}}; & \textnormal{(mutable)}\\
& \widetilde{D}_{\varpi_{3},s_2s_3 \varpi_{3}}; & \textnormal{(mutable)}\\
& \widetilde{D}_{\varpi_{4},s_2s_3s_4 \varpi_{4}}; & \textnormal{(frozen)}\\
& \widetilde{D}_{\varpi_{1},s_2s_3s_4s_1\varpi_{1}}; & \textnormal{(frozen)}\\
& \widetilde{D}_{\varpi_{2},s_2s_3s_4s_1s_2\varpi_{2}}; & \textnormal{(frozen)}\\
& \widetilde{D}_{\varpi_{3},s_2s_3s_4s_1s_2s_3 \varpi_{3}}. & \textnormal{(frozen)}\\
& \Delta_{\varpi_{2},\varpi_{2}}. & \textnormal{(frozen)}\\
\end{align*}

Also, from Corollary \ref{minor 1} and Corollary \ref{minor 2} we get that

\begin{align*}
& \widetilde{D}_{\varpi_{2},s_2 \varpi_{2}} & =& \Delta_{\varpi_{2},s_2 \varpi_{2}}\\
& \widetilde{D}_{\varpi_{3},s_2s_3 \varpi_{3}} & =& f_1\\
& \widetilde{D}_{\varpi_{4},s_2s_3s_4 \varpi_{4}}& =& f_2\\
& \widetilde{D}_{\varpi_{1},s_2s_3s_4s_1\varpi_{1}}& =& f_3\\
& \widetilde{D}_{\varpi_{2},s_2s_3s_4s_1s_2\varpi_{2}}& =& \Delta_{\varpi_{2},s_2s_3s_4s_1s_2\varpi_{2}}\\
& \widetilde{D}_{\varpi_{3},s_2s_3s_4s_1s_2s_3 \varpi_{3}}& =& f_4\\
\end{align*}

\noindent such that $f_1, f_2, f_3, f_4$ are the appropriate unique functions of Corollary \ref{minor 2}.

Let us now calculate the degree of each of them. For $f_1$, consider the equation
\begin{align*}
s_2(s_3\varpi_3)&=s_3\varpi_3-(\alpha_2^\vee,s_3\varpi_3)\alpha_2\\
&=s_3\varpi_3-(\alpha_2^\vee,\varpi_3-\alpha_3)\alpha_2\\
&=s_2\varpi_2-\alpha_2.
\end{align*}
Hence, the left degree is 1 and by Corollary \ref{minor 2} the homogeneous degree of $f_1$ is $\varpi_2$.\\

\noindent Consequently,
$$f_1=\dfrac{\Delta_{\varpi_{3},s_2s_3 \varpi_{3}}\Delta_{\varpi_{2}, \varpi_{2}}}{\Delta_{\varpi_{3}, \varpi_{3}}}$$
Analogously, one can get that the homogeneous degree of $f_2,f_3,f_4$ is $\varpi_2$ as well. Therefore,
\begin{align*}
    f_2 &= \dfrac{\Delta_{\varpi_{4},s_2s_3s_4 \varpi_{4}}\Delta_{\varpi_{2}, \varpi_{2}}}{\Delta_{\varpi_{4}, \varpi_{4}}} ;\\
    f_3 &= \dfrac{\Delta_{\varpi_{4},s_2s_3s_4s_1 \varpi_{1}}\Delta_{\varpi_{2}, \varpi_{2}}}{\Delta_{\varpi_{1}, \varpi_{1}}} ;\\
    f_4 &= \dfrac{\Delta_{\varpi_{3},s_2s_3s_4s_1s_2s_3 \varpi_{3}}\Delta_{\varpi_{2}, \varpi_{2}}}{{\Delta_{\varpi_{3}, \varpi_{3}}}} .\\
\end{align*}

\noindent Now, the lift of the exchange relations can be calculated easily. For instance, mutating at $k=1$, we get
\begin{align*}
    D_{\varpi_{2},s_2 \varpi_{2}}D_1'
    &=D_{\varpi_{3},s_2s_3 \varpi_{3}}D_{\varpi_{1},s_2s_3s_4s_1\varpi_{1}}+D_{\varpi_{2},s_2s_3s_4s_1s_2\varpi_{2}}\\
    &=\frac{f_1}{\Delta_{\varpi_2,\varpi_2}}\frac{f_3}{\Delta_{\varpi_2,\varpi_2}}+\frac{\Delta_{\varpi_{2},s_2s_3s_4s_1s_2\varpi_{2}}}{\Delta_{\varpi_2,\varpi_2}}\\
    &=\frac{f_1f_3+\Delta_{\varpi_2,\varpi_2}\Delta_{\varpi_{2},s_2s_3s_4s_1s_2\varpi_{2}}}{\Delta_{\varpi_2,\varpi_2}^2}.
\end{align*}
Thus, by Lemma \ref{multip.} and Proposition \ref{GLS tilde map 2}
$$\text{ \test{D_{\varpi_{2},s_2 \varpi_{2}}D_1'}}=f_1f_3+\Delta_{\varpi_2,\varpi_2}\Delta_{\varpi_{2},s_2s_3s_4s_1s_2\varpi_{2}}.$$
The other mutation exchange relations can be lifted similarly. Finally, we get that the exchange matrix attached to the cluster algebra $\widehat{\mathcal{A}} \subset \CC[G/P_K^-]$ is

\[
\widehat{B}=\begin{blockarray}{ccc}
 1 & 2  \\
\begin{block}{(cc)c}
  0 & -1 & 1 \\
  1 & 0 & 2 \\
  \cmidrule(lr){1-2}
  0 & 1 & 3 \\
  1 & 0 & 4 \\
  -1 & 1 & 5 \\
  0 & -1 & 6 \\
  \cmidrule(lr){1-2}
  -1 & 0 & 1 \in J \\
\end{block}
\end{blockarray}
\quad .\]

\end{example}








\begin{example}[Type $B$, c.f. \cite{Kadhem}]\label{explicit type B}
Let $G$ be a semisimple algebraic group of type $B_3$, say $G={SO}_{2(3)+1}={SO}_7$, $J=\{3\}$ and $K=I \setminus J=\{1,2\}.$ Consider the longest word
\[w_0={s_1s_2s_1} \boldsymbol{s_3s_2s_1s_3s_2s_3}.\]
In \cite{Kadhem}, it was shown that $\CC[G/P_K^-]$ has a cluster structure whose initial extended cluster is given by the variables
\begin{align*}
&\widetilde{D}_{\varpi_{3},s_3 \varpi_{3}}; &\textnormal{(mutable)}\\
&\widetilde{D}_{\varpi_{2},s_3s_2 \varpi_{2}}; &\textnormal{(mutable)}\\
&\widetilde{D}_{\varpi_{3},s_3s_2s_1s_3 \varpi_{3}}; &\textnormal{(mutable)}\\
&\widetilde{D}_{\varpi_{1},s_3s_2s_1 \varpi_{1}}; &\textnormal{(frozen)}\\ 
&\widetilde{D}_{\varpi_{2},s_3s_2s_1s_3s_2 \varpi_{2}}; &\textnormal{(frozen)}\\
&\widetilde{D}_{\varpi_{3},s_3s_2s_1s_3s_2s_3 \varpi_{3}}; &\textnormal{(frozen)}\\
&\Delta_{\varpi_3,\varpi_3}. &\textnormal{(frozen)}
\end{align*}
Now, using Corollary \ref{minor 1} and Corollary \ref{minor 2} it is easily seen that
\begin{align*}
&\widetilde{D}_{\varpi_{3},s_3 \varpi_{3}} &=&\Delta_{\varpi_{3},s_3 \varpi_{3}}\\
&\widetilde{D}_{\varpi_{2},s_3s_2 \varpi_{2}}
&=&f\\
&\widetilde{D}_{\varpi_{3},s_3s_2s_1s_3 \varpi_{3}} &=&\Delta_{\varpi_{3},s_3s_2s_1s_3 \varpi_{3}}\\
&\widetilde{D}_{\varpi_{1},s_3s_2s_1 \varpi_{1}}
&=&g\\ 
&\widetilde{D}_{\varpi_{2},s_3s_2s_1s_3s_2 \varpi_{2}}  &=&h\\
&\widetilde{D}_{\varpi_{3},s_3s_2s_1s_3s_2s_3 \varpi_{3}}
&=&\Delta_{\varpi_{3},s_3s_2s_1s_3s_2s_3 \varpi_{3}},
\end{align*}
where $f,g,h$ are the unique suitable functions gotten by Corollary \ref{minor 2}.
Moreover, the lifting of the mutation exchange relations of the cluster algebra $\mathcal{A}$ becomes obvious now. In fact,
exchange relation induced by mutating at $1$ is
\begin{align*}
{D}_{\varpi_{3},s_3 \varpi_{3}} D_1' &= {D}_{\varpi_{2},s_3s_2 \varpi_{2}} + {D}_{\varpi_{3},s_3s_2s_1s_3 \varpi_{3}}.\\
\end{align*}
Now, to find the lift degree of ${D}_{\varpi_{2},s_3s_2 \varpi_{2}}$ we consider the equation
$$s_3(s_2(\varpi_2)) = s_2(\varpi_2) - 2 \alpha_3.$$
This shows that the lift degree of ${D}_{\varpi_{2},s_3s_2 \varpi_{2}}$ is 2. Thus, it follows that the lift degree of $f \in \CC[G/P_K^-]$ is $2\varpi_3$. Moreover, by Corollary \ref{minor 2}
$$f=\dfrac{\Delta_{\varpi_{2},s_3s_2 \varpi_{2}}\Delta_{\varpi_3,\varpi_3}^2}{\Delta_{\varpi_2,\varpi_2}} $$
and
\begin{align*}
{D}_{\varpi_{3},s_3 \varpi_{3}} D_1' &= {D}_{\varpi_{2},s_3s_2 \varpi_{2}} + {D}_{\varpi_{3},s_3s_2s_1s_3 \varpi_{3}}\\
&= \dfrac{f}{{\Delta}_{\varpi_{3}, \varpi_{3}}^2} + \dfrac{{\Delta}_{\varpi_{3},s_3s_2s_1s_3 \varpi_{2}}}{{\Delta}_{\varpi_{3}, \varpi_{3}}}\\
&=\dfrac{ f+ \Delta_{\varpi_{3}, \varpi_{3}} \cdot {\Delta}_{\varpi_{3},s_3s_2s_1s_3 \varpi_{2}}}{\Delta_{\varpi_{3}, \varpi_{3}}^2}.
\end{align*}
Hence, by Lemma \ref{multip.} and Proposition \ref{GLS tilde map 2}
$$ 
\text{  \test{{D}_{\varpi_{3},s_3 \varpi_{3}} D_1'}}=\widetilde{D}_{\varpi_{3},s_3 \varpi_{3}} \widetilde{D_1'}=f+ \Delta_{\varpi_{3}, \varpi_{3}} \cdot {\Delta}_{\varpi_{3},s_3s_2s_1s_3 \varpi_{2}}.
$$
On the other hand, the exchange relation induced by mutating at 2 is
\begin{align*}
{D}_{\varpi_{2},s_3s_2 \varpi_{2}} D_2' &= {D}_{\varpi_{3},s_3s_2s_1s_3 \varpi_{3}}^2 {D}_{\varpi_{1},s_3s_2s_1 \varpi_{1}} + {D}_{\varpi_{3},s_3\varpi_{3}}^2{D}_{\varpi_{2},s_3s_2s_1s_2\varpi_{2}}.\\
\end{align*}
Now, it is not hard to see that
$$s_3(s_2s_1(\varpi_1)) = s_2s_1(\varpi_1) - 2 \alpha_1 \quad \text{and} \quad
    s_3(s_2s_1s_3s_2(\varpi_2)) = s_2s_1s_3s_2(\varpi_2) -2 \alpha_3$$
Therefore, the lift degree of both $g$ and $h$ is $2\varpi_3$ again. Hence, by Corollary \ref{minor 2}, again
\begin{align*}
    g &=\dfrac{\Delta_{\varpi_{1},s_3s_2s_1 \varpi_{1}}\Delta_{\varpi_3,\varpi_3}^2}{\Delta_{\varpi_1,\varpi_1}}\\
    h &=\dfrac{\Delta_{\varpi_{2},s_3s_2s_1s_3s_2 \varpi_{2}}\Delta_{\varpi_3,\varpi_3}^2}{\Delta_{\varpi_2,\varpi_2}}
    \end{align*}
and
\begin{align*}
{D}_{\varpi_{2},s_3s_2 \varpi_{2}} D_2' &= {D}_{\varpi_{3},s_3s_2s_1s_3 \varpi_{3}}^2 {D}_{\varpi_{1},s_3s_2s_1 \varpi_{1}} + {D}_{\varpi_{3},s_3\varpi_{3}}^2{D}_{\varpi_{2},s_3s_2s_1s_2\varpi_{2}}\\
&=
 \dfrac{{\Delta}_{\varpi_{3},s_3s_2s_1s_3 \varpi_{3}}^2}{\Delta_{\varpi_{3}, \varpi_{3}}^2} \cdot \dfrac{g}{\Delta_{\varpi_{3}, \varpi_{3}}^2} + \dfrac{{\Delta}_{\varpi_{3},s_3 \varpi_{3}}^2}{\Delta_{\varpi_{3}, \varpi_{3}}^2}\cdot \dfrac{h}{\Delta_{\varpi_{3}, \varpi_{3}}^2}\\
 &=\dfrac{{\Delta}_{\varpi_{3},s_3s_2s_1s_3 \varpi_{3}}^2 \cdot {g} + {{\Delta}_{\varpi_{3},s_3 \varpi_{3}}^2} \cdot {h}}{\Delta_{\varpi_{3}, \varpi_{3}}^4}
\end{align*}
Hence, by Lemma \ref{multip.} and Proposition \ref{GLS tilde map 2} again
$$ 
\text{\test{{D}_{\varpi_{2},s_3s_2 \varpi_{2}} D_2'}}=\widetilde{D}_{\varpi_{2},s_3s_2 \varpi_{2}} \widetilde{D_2'}={\Delta}_{\varpi_{3},s_3s_2s_1s_3 \varpi_{3}}^2 \cdot {g} + {{\Delta}_{\varpi_{3},s_3 \varpi_{3}}^2} \cdot {h}.$$
The mutation relation induced by mutating at 3 lifts similarly.\\
The initial exchange matrix of the cluster algebra $\CC[N_K]$ given in Definition \ref{Cell cluster} is
\[B=
\begin{blockarray}{cccc}
1 & 2 & 4 \\
\begin{block}{(ccc)c}
  0 & -2 & 1 & 1 \\
  1 & 0 & -1 & 2 \\
  -1 & 2 & 0 & 4 \\
  0 & 1 & 0 & 3 \\
  0 & -1 & 1 & 5 \\
  0 & 0 & -1 & 6\\
\end{block}
\end{blockarray}
 .\]
\end{example}
Using the calculations above and Theorem \ref{main}, we get that $\CC[G/P_K^-]$ contains the cluster algebra $\widehat{\mathcal{A}}$ whose initial extended cluster variables are the ones in the list above and whose initial exchange matrix is
\[
\widehat{B}=\begin{blockarray}{cccc}
 1 & 2 & 4 \\
\begin{block}{(ccc)c}
  0 & -2 & 1 & 1 \\
  1 & 0 & -1 & 2 \\
  -1 & 2 & 0 & 4 \\
  0 & 1 & 0 & 3 \\
  0 & -1 & 1 & 5 \\
  0 & 0 & -1 & 6\\
  \cmidrule(lr){1-3}
  -1 & 0 & 0 & j \in J\\
\end{block}
\end{blockarray}
.\]

\begin{remark}
Note that our goal once we find the additional rows of $\widehat{B}$ is to make the relations homogeneous. Observing the previous examples, we can easily see that if $J=\{i_1\}$, the last row of the matrix $\widehat{B}$ is obtained by making the summation of the degrees of the positive monomial and the negative monomial equal in each column. Hence, the entries of the last row are obtained according to that.
\end{remark}

\section{General theorems}
In the previous section, we saw how one uses the results of this paper to obtain explicit examples of type $A$ and type $B$. In this section, we generalize these examples to any 2-step parabolic subgroup of type $A$ and to any maximal parabolic subgroup of type $B$. Other general examples can be obtained similarly to the combination of the results of this section and the previous one.

\begin{example}[2-step parabolic subgroups of type $A$]
Let $G$ be of type $A_n$, that is, $G=SL_{n+1}$. Take $J=\{j_1,j_2\}$ with $j_1 < j_2$ and $K=\{1,...,n\} \setminus \{j_1,j_2\}$. The longest word can be determined once we know the positions of $j_1$ and $j_2$. Indeed, the desired expression of the longest word starts with the longest word of $A_{j_1-1}$ indexed by $1,...,j_1-1$, followed by the longest word of $A_{j_2-j_1-1}$ indexed by $j_1+1,...,j_2-1$ and then followed by the longest word of $A_{n-j_2}$ indexed by $j_2+1,...,n$, followed by the completion of that longest word by a subword generating $N_K$. More concretely, let
\begin{align*}
u_1&=s_1s_2...s_{j_1-1}s_1s_2...s_{j_1-2}...s_1s_2s_1 \\
u_2&=s_{j_1+1}s_{j_1+2}...s_{j_2-1}s_{j_1+1}s_{j_1+2}...s_{j_2-2}...s_{j_1+1}s_{j_1+2}s_{j_1+1}\\
u_3&=s_{j_2+1}s_{j_2+2}...s_{n-1}s_{j_2+1}s_{j_2+2}...s_{n-2}...s_{j_2+1}s_{j_2+2}s_{j_2+1} 
\end{align*}
Then, there exists a subword $u_4$ in which the longest word $w_0$ can be expressed as $w_0=u_1u_2u_3u_4$. One way to construct $u_4$ is to take $s_1...s_n$, together with a reduced word consisting of $j_2-1$ reduced subwords each of length $n-j_2$, followed by a reduced word consisting $j_2-j_1$ reduced subwords each of length $j_1$. In particular, we may choose $u_4$ to be
\begin{align*}
u_4 &=s_1...s_n u_5 u_6,\\
u_5 &=s_1...s_{n-j_2}s_2...s_{n+1-j_2}s_3...s_{n+2-j_2}...s_{j_2-1}...s_{n-2},\\
u_6 &=s_1...s_{j_1}s_2...s_{j_1+1}s_3...s_{j_1+2}...s_{j_2-j_1}...s_{j_2-1}.
\end{align*}
In fact, this $u_4$ generates $N_K$ and it is of length $n+(n-j_2)(j_2-1)+j_1(j_2-j_1)$.\\

Obviously, one now can use the fact that the generalized minor is a flag minor in type $A$ and use the Plucker relation. Then, follow the procedure of Example 10.3 of \cite{GLS} to get the general picture of how the cluster algebra $\widehat{\mathcal{A}}$ is constructed generally from a $2$-step parabolic subgroup, that is, a parabolic subgroup $P_K^-$ with $|J|=2$. Let $M:=n+(n-j_2)(j_2-1)$ and
\begin{align*}
A &:= \left\{n-1, n \right\} \\
&\cup \Big \{ a \mid a\in [n+1,M] \quad \textnormal{and} \quad i_a \geq j_2 -1  \Big\} \\
&\cup \left\{\begin{aligned}
&M+1, \quad M+j_1+1, \quad M+2j_1+1, \quad ..., \quad M+(j_2-j_1-1)j_1+1,\\
&M+(j_2-j_1-1)j_1+2, \quad M+(j_2-j_1-1)j_1+3, \quad ..., \quad M+(j_2-j_1)j_1
\end{aligned}
\right\}.
\end{align*}
It is not hard to see that $s(i_k)=\infty$ if and only if $k \in A$. Thus, by Definition \ref{Cell cluster}, these $k$'s are exactly the indexes of the frozen variables of the desired initial seed of $\mathcal{A}=\CC[N_K]$, while the other indexes are the ones of the mutable variables. More concretely, the frozen variables are
$${ D}_{\varpi_{i_k},w_{\leq k} \varpi_{i_k}}, \quad (k \in A),$$
while the mutable variables are $${D}_{\varpi_{i_l},w_{\leq l} \varpi_{i_l}}, \quad (l \notin A).$$
Also, by Definition \ref{Cell cluster}, the exchange matrix of this seed is
$$B_{jk} = \begin{cases}
1, & \text{if } j=p(k), \\
-1, & \text{if } j=s(k),\\
a_{i_j i_k}, & \text{if } j<k<s(j)<s(k),\\
-a_{i_j i_k}, & \text{if } k<j<s(k)<s(j),\\
0, & \text{otherwise;}
\end{cases}$$
Now, one can rewrite the indexes of the flag minors and find the use the Plucker relation to homogenize the lift of each exchange relation.
\end{example}

\begin{example}[Maximal parabolic subgroups of type $B$]
Let $G$ be a semisimple algebraic group of type $B_n$. Let $J=\{j\}$ and $K=\{1,...,n\} \setminus \{j\}.$ The description of the longest word $w_0$ here depends on the position of $j$. We generalize Example \ref{explicit type B} by taking $j=n$. Here $w_0$ is constructed by taking the longest word $u$ of $A_{n-1}$ indexed by $1,...,n-1$, completed by some subword generating $N_k$, for instance, $$v=s_ns_{n-1}...s_1s_ns_{n-1}...s_2s_ns_{n-1}...s_3s_ns_{n-1}s_n.$$
Therefore, in this case, we have $w_0=uv.$ Let
$$A=\left \{n, 2n-1, 3n-3, 4n-5,...,\dfrac{n(n+1)}{2}-1, \dfrac{n(n+1)}{2} \right \}.$$
Note that $s(i_k)=\infty$ if and only if $k \in A$. Thus, the frozen variables are those indexed by such $k$'s. Same as before, the frozen variables are
$${ D}_{\varpi_{i_k},w_{\leq k} \varpi_{i_k}}, \quad (k \in A),$$
while the mutable variables are $${D}_{\varpi_{i_l},w_{\leq l} \varpi_{i_l}}, \quad (l \notin A).$$
Again, Definition \ref{Cell cluster}, gives the recipe of the exchange matrix $B$.\\

Now, the exchange matrix $\widehat{B}$ of the lift of this seed to the cluster algebra $\widehat{\mathcal{A}} \subset \CC[G/P_K^-]$ is $B$ extended by the row indexed by $j=n$, whose entries are obtained from Theorem \ref{main}. The frozen variables are $\Delta_{\varpi_n,\varpi_n}$ together with
$$\widetilde{D}_{\varpi_{i_k},w_{\leq k} \varpi_{i_k}}=\dfrac{\Delta_{\varpi_{i_k},w_{\leq k} \varpi_{i_k}}\Delta_{\varpi_{i_1}, \varpi_{i_1}}^{d_k}}{\Delta_{\varpi_{i_k}, \varpi_{i_k}}},$$
where $k \in A$. On the other hand, the mutable ones are
$$\widetilde{ D}_{\varpi_{i_l},w_{\leq l} \varpi_{i_l}}=\dfrac{\Delta_{\varpi_{i_l},w_{\leq l} \varpi_{i_l}}\Delta_{\varpi_{i_1}, \varpi_{i_1}}^{d_l}}{\Delta_{\varpi_{i_l}, \varpi_{i_l}}},$$
where $l \notin A$.

\end{example}

\end{document}